\documentclass[12pt,reqno]{amsart}
\usepackage{amsmath,amsthm,amssymb,amsfonts,amscd}
\usepackage{mathrsfs}
\usepackage{bbm}
\usepackage{bbding}
\usepackage{graphicx,latexsym}
\usepackage[backref=page]{hyperref}
\usepackage{hyperref}
\usepackage{geometry}
\usepackage{color}
\usepackage{xcolor}
\usepackage{picture,epic}
\usepackage{tikz}

\numberwithin{equation}{section}

\theoremstyle{plain}
\newtheorem{theorem}{Theorem}[section]

\newtheorem{proposition}[theorem]{Proposition}

\theoremstyle{definition}

\theoremstyle{remark}

\renewcommand{\Im}{\operatorname{Im}}
\newcommand{\vol}{\operatorname{vol}}

\newcommand{\sym}{\operatorname{sym}}

\newcommand{\X}{\mathbb{X}}

\newcommand{\SL}{\operatorname{SL}}

\newcommand{\dd}{\mathrm{d}}

\newcommand{\Ma}{\mathcal{M}}

\makeatletter
\def\@tocline#1#2#3#4#5#6#7{\relax
  \ifnum #1>\c@tocdepth 
  \else
    \par \addpenalty\@secpenalty\addvspace{#2}%
    \begingroup \hyphenpenalty\@M
    \@ifempty{#4}{%
      \@tempdima\csname r@tocindent\number#1\endcsname\relax
    }{%
      \@tempdima#4\relax
    }%
    \parindent\z@ \leftskip#3\relax \advance\leftskip\@tempdima\relax
    \rightskip\@pnumwidth plus4em \parfillskip-\@pnumwidth
    #5\leavevmode\hskip-\@tempdima
      \ifcase #1
       \or\or \hskip 1em \or \hskip 2em \else \hskip 3em \fi%
      #6\nobreak\relax
    \hfill\hbox to\@pnumwidth{\@tocpagenum{#7}}\par
    \nobreak
    \endgroup
  \fi}
\makeatother

\begin{document}

\title[On the $L^6$-norm of holomorphic Hecke eigenforms]
{On the $L^6$-norm of holomorphic Hecke eigenforms}

\author{Chengliang Guo}
\address{Mathematical Research Center\\ Shandong University \\ Jinan \\ Shandong 250100 \\China}
\email{chengliang.guo@mail.sdu.edu.cn}
\bigskip

\author{Liangxun Li}
\address{Mathematical Research Center\\ Shandong University \\ Jinan \\ Shandong 250100 \\China}
\address{HUN-REN Alfr\'ed R\'enyi Institute of Mathematics \\ Budapest\\ Pf. 127, H-1364 \\ Hungary}
\email{lilxmath@163.com}
\date{\today}

\begin{abstract}
Let $H_k$ be an $L^2$-normalized Hecke basis for the space of all holomorphic cusp forms of weight $k$.
We show that
$
\max_{f\in H_k}\Vert F\Vert_6\gg (\log\log k)^{\frac{1}{2}}
$
where $F(z)=(\Im z)^{\frac{k}{2}}f(z).$
This confirms that the $L^6$-norm of Hecke eigenforms does not converge uniformly as the weight goes to infinity.
We also give some results on the joint mass of degree $6$.
\end{abstract}

\keywords{$L^p$-norm, Hecke eigenforms}

\subjclass[2020]{11F11}


\maketitle

\section{Introduction}
Let $\mathbb{H}$ be the upper half-plane in $\mathbb{C}$ equipped with the hyperbolic measure $\dd\mu(z)=\dd x\dd y/y^2,$ and let $\X=\SL_2(\mathbb{Z})\backslash\mathbb{H}$ be the hyperbolic surface obtained as the quotient of $\mathbb{H}$ by the modular group $\SL_2(\mathbb{Z})$.
By the classical theory of modular forms, the space of all holomorphic cusp forms of weight $k$ on $\X$ admits an orthogonal basis $H_k$ consisting of $L^2$-normalized holomorphic Hecke eigenforms of weight $k$. 
For $f\in H_k$, as the weight $k$ tends to infinity, a fundamental problem is to understand the value distribution of $f$.
One approach to this problem is to examine the growth of the $L^p$-norms for $2<p\leq \infty$. Let $F(z)=(\Im z)^{\frac{k}{2}}f(z)$ and note that $|F(z)|$ is $\SL_2(\mathbb{Z})$-invariant. We want to study the behaviour of the $L^p$-norm
\begin{equation*}
\Vert F\Vert_p:=\Big(\int_{\X}|F(z)|^p\dd \mu(z)\Big)^{\frac{1}{p}}
\end{equation*}
as $k$ tends to infinity.

Under the $L^2$-normalization $\Vert F\Vert_2^2=\vol(\X)=\pi/3$, a famous problem is to study the local $L^2$-norm of $F$ as $k$ becomes large, which is called the holomorphic quantum unique ergodicity (hQUE).
Specifically, for any fixed compact domain $\Omega$ of $\X$ whose boundary $\partial\Omega$ has hyperbolic measure zero, we have
\begin{equation}\label{eqn:hQUE}
  \frac{1}{\vol(\Omega)}\int_{\Omega}|F(z)|^2\dd\mu(z)\rightarrow 1, \quad \text{ as } k\to +\infty.
\end{equation}
This was first proposed by Rudnick and Sarnak \cite{RS94QUE}, and was settled by Holowinsky and Soundararajan \cite{HS10QUE}. 

When $p=4$, Blomer, Khan, and Young \cite{BKY13} gave a conjecture on the global $L^4$-norm:
\begin{equation}\label{eqn:L^4norm}
  \frac{1}{\vol(\X)}\int_{\X}|F(z)|^4\dd\mu(z)\rightarrow 2, \quad \text{ as } k\to +\infty.
\end{equation}
Equivalently, $\Vert F\Vert_4^4$ should be asymptotic to $2\pi/3$ as $k\to+\infty$. Unconditionally, they proved the upper bound $\Vert F\Vert_4^4\ll k^{1/3+\varepsilon}$ for any $\varepsilon>0$.
Under GRH, Zenz \cite{Zenz23} established the optimal bound $\Vert F\Vert_4^4\ll 1$ which is the best possible up to a constant factor toward \eqref{eqn:L^4norm}.  
In analogy with the $L^4$-norm problem, Huang \cite{Huang24} proved, under GRH and GRC, an asymptotic formula for a $(2, 2)$ joint distribution. 

For $p\geq 6$, Blomer, Khan, and Young also showed that 
\begin{equation}\label{eqn:L^plowerbound}
\Vert F\Vert_p\gg k^{\frac{1}{4}-\frac{3}{2p}-\varepsilon},
\end{equation}
which confirms that the $L^p$-norm blows up as $k\to +\infty$ when $p>6$. At the endpoint $p=\infty$, Xia \cite{Xia07} determined the sup-norm bound $\Vert F\Vert_\infty=k^{\frac{1}{4}+o(1)}.$

\medskip

One may ask how the global $L^6$-norm grows. 
By interpolation between the sharp $L^4$-norm bound and the sup-norm bound, we get $\Vert F\Vert_6\ll k^{1/12+\varepsilon}$.
Motivated by the $L^4$-norm conjecture, a natural question is whether the following asymptotic formula holds:
\begin{equation}\label{eqn:L^6normAsymp}
  \frac{1}{\vol(\X)}\int_{\X}|F(z)|^6\dd\mu(z)\rightarrow 6, \quad \text{ as } k\to +\infty.
\end{equation} 
Here the constant $6$ comes from the sixth moment of the complex Gaussian distribution $\mathcal{CN}(0,1),$ i.e. $\mathbb{E}[|Z|^6]=6$ for $Z\sim\mathcal{CN}(0,1).$
This choice is consistent with the fourth moment case in \eqref{eqn:L^4norm}.
Such a phenomenon is more naturally formulated in terms of local $L^p$-norms.
Specifically, for a fixed compact domain $\Omega$ as above in the hQUE setting, we expect the following asymptotic:
\begin{equation}\label{eqn:hRWC}
\frac{1}{\vol(\Omega)}\int_{\Omega}|F(z)|^{2m}\dd\mu(z)\rightarrow m!, \quad \text{ as } k\to +\infty,
\end{equation}
for any positive integer $m$ (see e.g. \cite{Huang24}). This is motivated by a holomorphic analog of the random wave model.

Although \eqref{eqn:L^6normAsymp} is suggested by the Gaussian moment heuristic, we demonstrate that it is false.
This note aims to disprove \eqref{eqn:L^6normAsymp} by establishing the following result.

\begin{theorem}\label{thm:L^6norm}
Let $k\geq 10$ be a large even integer. Then we have
\[
\max_{f\in H_k}\Vert F\Vert_6\gg (\log\log k)^{\frac{1}{2}}.
\]
\end{theorem}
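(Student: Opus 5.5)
The plan is to bound $\Vert F\Vert_6$ from below by the contribution of the cusp region, where $F$ is dominated by its first Fourier coefficient. Then we choose $f$ so that this coefficient is unusually large, which is the same as choosing $f$ with $L(1,\sym^2 f)$ unusually small.

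\textbf{Step 1 (cusp region).} Write $f(z)=\sum_{n\geq1}a_f(n)e(nz)$ with $a_f(n)=a_f(1)\lambda_f(n)n^{\frac{k-1}{2}}$. The region $\{y\geq 1\}$ with $0\leq x<1$ embeds in $\X$. For each fixed $y$, Hölder's inequality in $x$ gives
\[
\int_0^1|F(x+iy)|^6\,\dd x\;\geq\;\Big|\int_0^1F(x+iy)e(-x)\,\dd x\Big|^6=|a_f(1)|^6y^{3k}e^{-12\pi y}.
\]
Integrating against $\dd y/y^2$ over $y\geq1$ yields
\[
\Vert F\Vert_6^6\;\geq\;|a_f(1)|^6\,\frac{\Gamma(3k-1)}{(12\pi)^{3k-1}}\,(1+o(1)).
\]
Completing the integral to $y\geq 0$ costs only $O(e^{-ck})$ relative to the main term, since the integrand concentrates at $y\approx k/(4\pi)$.

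\textbf{Step 2 (normalization).} We use the standard Rankin--Selberg formula for the $L^2$-normalization $\Vert F\Vert_2^2=\pi/3$:
\[
|a_f(1)|^2\asymp\frac{(4\pi)^{k-1}}{\Gamma(k)\,L(1,\sym^2 f)}.
\]
Inserting this and applying Stirling's formula, $\Gamma(3k-1)/\Gamma(k)^3$ is $3^{3k}$ times a power of $k$. In the product
\[
\frac{(4\pi)^{3k}\,\Gamma(3k-1)}{\Gamma(k)^3\,(12\pi)^{3k}},
\]
the exponential factors cancel exactly. The powers of $k$ also cancel: we get $k^{1/2}\cdot k^{-1}\cdot k^{3/2}\cdot k^{-1}=k^0$. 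This is consistent with the exponent $\frac14-\frac{3}{2p}=0$ at $p=6$ in \eqref{eqn:L^plowerbound}. Hence
\[
\Vert F\Vert_6^6\gg L(1,\sym^2 f)^{-3}
\]
uniformly for $f\in H_k$.

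\textbf{Step 3 (small symmetric square $L$-value).} It remains to find $f\in H_k$ with $L(1,\sym^2 f)\ll(\log\log k)^{-1}$. This gives $\Vert F\Vert_6^6\gg(\log\log k)^3$, which is the theorem.

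This existence statement is the main obstacle. It is an extreme-value result in the weight aspect, and I would invoke the known results of Lau--Wu and Royer--Wu. They show there are $f\in H_k$ with
\[
L(1,\sym^2 f)\leq\big(c+o(1)\big)(\log\log k)^{-1}
\]
for an explicit constant $c>0$.

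If a self-contained argument is needed, I would proceed as follows:
\begin{itemize}
\item[(a)] Approximate $\log L(1,\sym^2 f)$ by a short Euler product over $p\leq \log k\cdot(\log\log k)^{A}$. Outside a small exceptional set, this is controlled by a zero-density estimate for the family.
\item[(b)] Compute harmonic-weighted moments $\sum_f\omega_f\prod_p(\cdots)$ by the Petersson formula. This shows the Hecke eigenvalues $\lambda_f(p)$ for $p\leq c\log k$ are jointly Sato--Tate distributed, with enough uniformity to force $\lambda_f(p^2)=\lambda_f(p)^2-1$ close to $-1$ at all small primes simultaneously for some $f$.
\end{itemize}
Forcing $\lambda_f(p^2)\approx-1$ at all these primes produces $\prod_p(1+1/p)^{-1}\asymp(\log\log k)^{-1}$. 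Finally, the harmonic weights $\omega_f\asymp 1/(k\,L(1,\sym^2 f))$ only help, because they favour small $L(1,\sym^2 f)$.
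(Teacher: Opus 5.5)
Your proposal is correct and follows the paper's proof. You restrict to the cusp strip $\{0\le x<1,\ y\ge 1\}$, isolate the leading Fourier term, use Stirling to get $\Vert F\Vert_6^6\gg L(1,\sym^2 f)^{-3}$ uniformly in $f\in H_k$, and then invoke Lau--Wu's weight-aspect extreme values of $L(1,\sym^2 f)$. The only cosmetic difference is that you apply H\"older to the first Fourier coefficient of $F$, whereas the paper applies the triangle inequality to the sixth Fourier coefficient of $f^6$; both give $|a_f(1)|^6y^{3k}e^{-12\pi y}$.
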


Consider the spectral decomposition of $f^2$ in $H_{2k}$:
\begin{equation}\label{eqn:Hecke_expansion}
f^2(z)=\sum_{g\in H_{2k}}\frac{\langle F^2, G\rangle}{\vol(\X)} g(z).
\end{equation}
We have
\begin{equation}\label{eqn:spec_dec}
\Vert F\Vert_6^6=\sum_{g_1\in H_{2k}}\sum_{g_2\in H_{2k}}
\frac{\langle F^2, G_1\rangle\overline{\langle F^2, G_2\rangle}}{\vol(\X)^2}\int_{\X}G_1(z)\overline{G_2(z)}|F(z)|^2\dd \mu(z).
\end{equation}
Restricting to the diagonal terms ($g_1=g_2$) in \eqref{eqn:spec_dec}, then using Huang's asymptotic formula \cite[Theorem 1.5]{Huang24} and  Parseval's identity
$\Vert F\Vert_4^4=\frac{1}{\vol(\X)}\sum_{g\in H_{2k}}|\langle F^2, G\rangle|^2$, we have,  under GRH and GRC,
\[
\sum_{g\in H_{2k}}\frac{|\langle F^2, G\rangle|^2}{\vol(\X)^2}\int_{\X}|G(z)|^2|F(z)|^2\dd \mu(z)=\Vert F\Vert_4^4\Big(1+O((\log k)^{-1/4+\varepsilon})\Big),
\]
for any $\varepsilon>0.$
Moreover, Zenz's upper bound \cite[Theorem 1.1]{Zenz23} implies that the total diagonal contribution is bounded. 
Consequently, Theorem \ref{thm:L^6norm} shows that the contribution of the off-diagonal terms in \eqref{eqn:spec_dec} can dominate. 
This also indicates that the matrix coefficients in \eqref{eqn:spec_dec} do not exhibit strong cancellation for all $f\in H_k$.

The proof of Theorem \ref{thm:L^6norm} follows from a simple refinement of the argument proving \eqref{eqn:L^plowerbound} in \cite[\S 3]{BKY13}. Indeed, we can show that
\begin{equation*}
\Vert F\Vert_p\gg_{p} k^{\frac{1}{4}-\frac{3}{2p}}L(1,\sym^2f)^{-\frac{1}{2}}.
\end{equation*}
For $p=6$, we see $\Vert F\Vert_6\gg {L(1,\sym^2f)^{-\frac{1}{2}}}.$
The lower bound for the maximal $L^6$-norm is obtained from the extreme values of $L(1,\sym^2f)$ in the weight aspect. See Lau and Wu \cite{LW06}.

By the same method, we also establish a lower bound for the maximal joint mass.
Let $a,b\in \mathbb{Z}_{\geq 1}$ with $a+b=6$, and let $k,\ell\geq 2$ be even integers.
For $f\in H_{k}$ and $g\in H_{\ell}$, we define the $(a, b)$ joint mass by
\[
\Ma_{a, b}(f, g):=\int_{\X}|F(z)|^a|G(z)|^b\dd\mu(z),
\] 
where $F=y^{\frac{k}{2}}f$ and $G=y^{\frac{\ell}{2}}g.$
When the weights $k$ and $\ell$ are sufficiently close, we show that $\Ma_{a,b}(f,g)$ cannot remain uniformly bounded.

\begin{theorem}\label{thm:mixednorm}
Let $k$ and $\ell$ be two large even integers satisfying $|k-\ell|\ll \sqrt{k}$. Then we have
\[
\max_{f\in H_k\atop g\in H_\ell}\Ma_{a, b}(f, g)\gg (\log\log k)^{3},
\]
for any $a, b\in \mathbb{Z}_{\geq 1}$ with $a+b=6.$
\end{theorem}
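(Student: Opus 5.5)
We follow the BKY approach behind \eqref{eqn:L^plowerbound}: near the point $z=iy$ with $y\approx k/(4\pi)$, the first Fourier coefficient of $F$ dominates, so $|F|$ is large on a small region there. For $f\in H_k$ write $f(z)=\sum_{n\ge1}\rho_f(n)n^{\frac{k-1}{2}}e(nz)$. The $L^2$-normalization $\Vert F\Vert_2^2=\pi/3$ together with the Rankin--Selberg formula gives
\[
|\rho_f(1)|^2\asymp \frac{(4\pi)^{k}}{\Gamma(k)\,L(1,\sym^2 f)}.
\]
On the box $R=\{z=x+iy: 0\le x\le 1,\ |y-\frac{k-1}{4\pi}|\le c\sqrt{k}\}$ with $c$ small, Stirling's formula shows that the first term $y^{k/2}|\rho_f(1)|e^{-2\pi y}$ has size $\asymp k^{1/4}L(1,\sym^2f)^{-1/2}$. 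The other Fourier terms decay geometrically in $n$ relative to the first (up to Deligne's bound on $\lambda_f(n)$), so after replacing $|F|$ by its average over $x\in[0,1]$ in the $L^2$ sense we keep the lower bound.

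Concretely, the plan is to use H\"older/Parseval in $x$. Since $R$ lies in the fundamental domain (the cusp region), we can bound
\[
\int_R |F|^a|G|^b\,\dd\mu \ \ge\ \int_{|y-y_0|\le c\sqrt k}\Big|\int_0^1 F(x+iy)\overline{e(x)}\,\dd x\Big|^{a}\Big|\int_0^1 G(x+iy)\overline{e(x)}\,\dd x\Big|^{b}\frac{\dd y}{y^2}.
\]
This uses the pointwise bound $\int_0^1|F|^a|G|^b\,\dd x\ge$ (product of first Fourier coefficients)$^{a,b}$, which is not automatic; it is the delicate point discussed below. The $y$-integral has length $\sqrt k$ and weight $y^{-2}\asymp k^{-2}$. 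Plugging in the coefficient sizes and using $|k-\ell|\ll\sqrt k$, the two Gaussian peaks at $y_0=(k-1)/4\pi$ and $(\ell-1)/4\pi$ overlap on a set of length $\asymp\sqrt k$. This gives
\[
\Ma_{a,b}(f,g)\gg k^{\frac{a+b}{4}}k^{\frac12}k^{-2}\,L(1,\sym^2f)^{-a/2}L(1,\sym^2g)^{-b/2}=L(1,\sym^2f)^{-a/2}L(1,\sym^2g)^{-b/2},
\]
since $a+b=6$. Finally, Lau--Wu \cite{LW06} supply $f\in H_k$ with $L(1,\sym^2f)\ll(\log\log k)^{-1}$, and likewise $g\in H_\ell$. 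Choosing both extremal gives $(\log\log k)^{(a+b)/2}=(\log\log k)^3$.

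The main obstacle is the lower bound $\int_0^1|F|^a|G|^b\,\dd x\gtrsim|\text{first coeff}|^a|\text{first coeff}|^b$ for general $a,b$, and for odd $a$ in particular. For the single-form case BKY use $\int_0^1|F|^p\ge|\int_0^1 F\overline{e(x)}|^p$, which follows from H\"older. For the mixed product we will instead control the tail directly. On $R$ we have $F(x+iy)=A(y)e(x)\,(1+E_F)$, where $E_F$ collects $\sum_{n\ge2}$. Using Deligne and the ratio $n^{(k-1)/2}e^{-2\pi(n-1)y}\le (n e^{-(n-1)})^{k/2}$-type decay at $y\approx k/4\pi$, the terms with $n\ge2$ are relatively $O(2^{k/2}e^{-k/2}\cdot)$. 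That is not small at the peak, since the $n=2$ term is comparable there. So we will shift the box to $y=y_0+\delta k$ with a small fixed $\delta>0$. This costs only a constant factor in $k^{1/4}$ (the Gaussian drops only by $e^{-O(\delta^2 k)}$) — wait, that factor is exponential. We therefore fall back on H\"older: $\int_0^1|F|^a|G|^b\,\dd x\ge|\int_0^1 F^{a'}\ldots|$ is unavailable. Instead we will pick a small set of $x$ where $|F|\ge\frac12|A|$ and $|G|\ge\frac12|B|$ simultaneously, using an $L^2$-in-$x$ argument. Parseval gives $\int_0^1|F|^2\,\dd x\ge|A|^2$, and the sup is bounded by $\ll k^{1/4+\varepsilon}$, so a positive-proportion set exists up to $k^{\varepsilon}$ losses. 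Those losses are fatal, so this step needs the most care, and it is where we would invest effort in a sharper Fourier-coefficient comparison.
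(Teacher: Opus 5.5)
There is a genuine gap, at exactly the step you flag: you never prove $\int_0^1|F|^a|G|^b\,\dd x\gg|A(y)|^a|B(y)|^b$. Your final fallback (a positive-proportion set from Parseval plus the sup-norm bound) loses $k^{\varepsilon}$, which, as you say yourself, is fatal. So as written the proposal is not a proof.

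The rest of your outline agrees with the paper. You work in the cusp, compare $|F|^a|G|^b$ with the product of first Fourier terms, and use $|\rho_f(1)|^2\asymp(\Gamma(k)L(1,\sym^2f))^{-1}$ with Stirling. Your box of length $\asymp\sqrt k$ around $y_0$, where the two Gaussian peaks overlap because $|k-\ell|\ll\sqrt k$, gives the same $\asymp 1$ constant as the paper's evaluation of $\int_1^\infty e^{-12\pi y}y^{\frac{ak+b\ell}{2}-2}\,\dd y$. Applying Lau--Wu to $f$ and $g$ independently is also what the paper does.

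The missing idea is the following. Since $a,b$ are positive integers, $|F|^a|G|^b=y^{\frac{ak+b\ell}{2}}|f^ag^b|$, and $f^ag^b$ is holomorphic and $1$-periodic. Its Fourier expansion starts at $e(6z)$, with coefficient exactly $\rho_f(1)^a\rho_g(1)^b(4\pi)^{\frac{a(k-1)+b(\ell-1)}{2}}$, because $n_1+\dots+n_6=6$ with all $n_i\ge1$ forces every $n_i=1$. Hence for every $y>0$
\[
\int_0^1|F|^a|G|^b\,\dd x\ \ge\ y^{\frac{ak+b\ell}{2}}\Big|\int_0^1 f(x+iy)^a g(x+iy)^b\,e(-6x)\,\dd x\Big| = |A(y)|^a|B(y)|^b .
\]
This needs no tail estimate and has no parity issue, since $|F|^a=|F^a|$ for every positive integer $a$. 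The inequality you dismissed as ``unavailable'' is in fact available, because $y$ is constant in the $x$-integral. This is exactly how the paper argues.

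Separately, your reason for abandoning the direct tail estimate is an arithmetic slip. For $|y-\frac{k-1}{4\pi}|\le C\sqrt k$, the ratio of the $n$-th term to the first is $\lambda_f(n)\,(ne^{1-n})^{\frac{k-1}{2}}e^{O((n-1)\sqrt k)}$. Since $ne^{1-n}\le 2/e<1$ for $n\ge 2$, the $n=2$ term is smaller by roughly $(2/e)^{k/2}$, not comparable. Summing with Deligne's bound gives $|F|=|A|(1+O(e^{-ck}))$, and likewise $|G|=|B|(1+O(e^{-c\ell}))$, pointwise on your box. That would also have closed the gap.
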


When $(a, b)=(3,3)$, with $k$ and $\ell$ close, similar to the $L^6$-norm case,
$\Ma_{a, b}(f, g)$ does not obey the joint Gaussian moment asymptotic. 
This can be compared with the $(2, 2)$ joint distribution in \cite{Huang24} and the $L^4$-norm.

Among these joint masses, one of the most interesting cases is $(a,b)=(2,4).$
When $k$ is large and $\ell$ is fixed,  $\Ma_{2, 4}(f, g)$ behaves like a weighted $L^2$-norm for $f.$
And when $k$ is fixed and $\ell$ is large, $\Ma_{2, 4}(f, g)$ behaves like a weighted $L^4$-norm for $g.$
Both cases lie far outside the range of assumptions in Theorem \ref{thm:mixednorm}.
A natural heuristic suggests that, in these extreme cases, one should have
\[
\frac{\Ma_{2,4}(f,g)}{\vol(\X)}
\sim
\frac{\Vert F\Vert_2^2}{\vol(\X)}
\cdot
\frac{\Vert G\Vert_4^4}{\vol(\X)}
\sim 2.
\] 
In \S \ref{sec:M24}, we study the size of $\Ma_{2, 4}(f, g)$ in these extreme cases. 

Together with Theorem \ref{thm:mixednorm}, this shows that the $(2,4)$ joint mass suggests a phase transition phenomenon. 
While the extreme regimes suggest asymptotic independence, the near-diagonal regime $k\approx \ell$ reveals a correlation between the two Hecke eigenforms, leading to unexpectedly large values of $\Ma_{2, 4}(f,g).$
It is therefore interesting to understand when this change of behavior occurs.

\section{$L^6$-norm and joint mass of degree $6$}
\subsection{Proof of Theorem \ref{thm:L^6norm} and Theorem \ref{thm:mixednorm}}
We begin with the Fourier expansion of $f\in H_k$:
\[
f(z)=\rho_f(1)\sum_{n\geq 1}\lambda_f(n)(4\pi n)^{\frac{k-1}{2}}e(nz),
\]
where $\lambda_f(n)$ is the $n$-th Hecke eigenvalue of $f$, and under $L^2$-normalization
\[
|\rho_f(1)|^2=\frac{\pi}{3}\cdot\frac{2\pi^2}{L(1, \sym^2f)\Gamma(k)}.
\]
Similarly, we have the same formula for $g\in H_\ell.$
For $a, b\in \mathbb{Z}_{\geq 1}$ with $a+b=6,$
it follows that
\begin{equation*}
\begin{split}
 \Ma_{a, b}(f, g)
  &\geq \int_{1}^{\infty}\int_{0}^{1}|f(x+iy)y^{\frac{k}{2}}|^a|g(x+iy)y^{\frac{\ell}{2}}|^b\frac{\dd x\dd y}{y^2}\\
  &\geq \int_{1}^{\infty}\Big|\int_{0}^{1}f(x+iy)^ag(x+iy)^be(-(a+b)x)\dd x\Big|y^{\frac{ak+b\ell}{2}}\frac{\dd y}{y^2}\\
  &=\int_{1}^{\infty}\Big|\rho_f(1)(4\pi)^{\frac{k-1}{2}}e^{-2\pi y}y^{\frac{k}{2}}\Big|^a\Big|\rho_g(1)(4\pi)^{\frac{\ell-1}{2}}e^{-2\pi y}y^{\frac{\ell}{2}}\Big|^b\frac{\dd y}{y^2}\\
  &\gg \frac{1}{L(1, \sym^2f)^\frac{a}{2}L(1, \sym^2g)^\frac{b}{2}}
  \int_{1}^{\infty}\Big|\frac{e^{-2\pi y}(4\pi y)^{\frac{k}{2}}}{\sqrt{\Gamma(k)}}\Big|^a\Big|\frac{e^{-2\pi y}(4\pi y)^{\frac{\ell}{2}}}{\sqrt{\Gamma(\ell)}}\Big|^b\frac{\dd y}{y^2}\\
  &=\frac{(4\pi)^{\frac{ka+\ell b}{2}}}{L(1, \sym^2f)^\frac{a}{2}L(1, \sym^2g)^\frac{b}{2}\Gamma(k)^\frac{a}{2}\Gamma(\ell)^\frac{b}{2}}
  \int_{1}^{\infty}e^{-12\pi y}y^{\frac{ka+\ell b}{2}-2}\dd y.
\end{split}
\end{equation*}
The incomplete gamma integral satisfies
\[
\int_{1}^{\infty}e^{-12\pi y}y^{\frac{ka+\ell b}{2}-2}\dd y=
\int_{0}^{\infty}e^{-12\pi y}y^{\frac{ka+\ell b}{2}-2}\dd y-O(1)
\gg \frac{\Gamma(\frac{ka+\ell b}{2}-1)}{(12\pi)^{\frac{ka+\ell b}{2}-1}}.
\]
Thus we get
\[
 \Ma_{a, b}(f, g)\gg \frac{C_{a, b}(k, \ell)}{L(1, \sym^2f)^\frac{a}{2}L(1, \sym^2g)^\frac{b}{2}},
\]
where
\[
C_{a, b}(k, \ell)=\frac{\Gamma(\frac{ka+\ell b}{2}-1)}{3^{\frac{ka+\ell b}{2}}\Gamma(k)^\frac{a}{2}\Gamma(\ell)^\frac{b}{2}}.
\]
By Stirling's formula, for $k, \ell$ large with $|k-\ell|\ll \sqrt{k}$, 
\[
C_{a, b}(k, \ell)=\frac{1}{6\pi \sqrt{3}}\exp\Big(-\frac{ab}{24}\frac{(\ell-k)^2}{k}\Big)\Big(1+O(k^{-1/2})\Big)\gg 1.
\]
Hence we get
\begin{equation}\label{eqn:mixedlowerbound}
 \Ma_{a, b}(f, g)\gg \frac{1}{L(1, \sym^2f)^\frac{a}{2}L(1, \sym^2g)^\frac{b}{2}}.
\end{equation}

Now we use an important result of Lau and Wu \cite[Theorem 2]{LW06}, which asserts that there exists an $f\in H_k$ such that for $k\to \infty$,
\begin{equation}\label{eqn:L1largevalue}
L(1, \sym^2f)\leq (1+o(1))(e^\gamma\zeta(2)^{-2}\log\log k)^{-1},
\end{equation}
where $\gamma$ is the Euler constant. 
This implies that
\[
\max_{f\in H_k}\frac{1}{L(1, \sym^2f)}\gg \log\log k.
\]
Also we have the same bound for $L(1, \sym^2g)^{-1}.$
Combining these estimates with \eqref{eqn:mixedlowerbound} completes the proof of Theorem \ref{thm:mixednorm}.
By setting $k=\ell$ and $f=g$ in \eqref{eqn:mixedlowerbound} and then using \eqref{eqn:L1largevalue}, we get Theorem \ref{thm:L^6norm}.

\subsection{Asymptotic phenomenon for $\Ma_{2, 4}(f, g)$}\label{sec:M24}
Let $f\in H_k$ and $g\in H_\ell$. By Theorem \ref{thm:mixednorm}, when $k$ and $\ell$ are close, the $(2,4)$ joint mass fails to converge uniformly. In this section, we study the behaviour of $\Ma_{2,4}(f,g)$ in the complementary case where $k$ and $\ell$ are far apart. Indeed, we show that $\Ma_{2,4}(f,g)$ does not exhibit large values in the extreme range where the two weights are widely separated.

When $k$ is fixed and $\ell$ becomes large,  we get
\[
\Ma_{2, 4}(f, g)\sim \Vert G\Vert_4^4
\]
under the smooth $L^4$-norm conjecture \eqref{eqn:hRWC}.

When $k$ is large and $\ell$ is fixed (or is less than a small power of $\log k$), we essentially have 
\[
\Ma_{2, 4}(f, g)\sim \Vert G\Vert_4^4
\]
 by adapting the ideas used to prove hQUE. In fact, this asymptotic formula holds in a larger range.

\begin{proposition}\label{prop:kLarge}
Assume GLH. Then for $1\ll \ell \ll k^{1/3-\delta}$ with some fixed $\delta>0$, we have
\[
\Ma_{2, 4}(f, g)= \Vert G\Vert_4^4+O(\ell^{3/2}k^{-1/2+\varepsilon}),
\]
for any $\varepsilon>0.$
If we additionally assume the $L^4$-norm conjecture \eqref{eqn:L^4norm}, we have
\[
\Ma_{2, 4}(f, g)\rightarrow 2\vol(\X), \quad \text{ as } \ell \to \infty.
\]
\end{proposition}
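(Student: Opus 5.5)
Write $\Ma_{2,4}(f,g)=\int_{\X}|F|^2\,|G|^4\,\dd\mu$. View this as testing the measure $|F|^2\dd\mu$ against the fixed-in-$k$ function $h:=|G|^4=y^{2\ell}|g|^4$. The expected main term is $\frac{1}{\vol(\X)}\Vert F\Vert_2^2\,\Vert G\Vert_4^4=\Vert G\Vert_4^4$, because $\Vert F\Vert_2^2=\vol(\X)$. The plan follows the Watson--Ichino approach to hQUE under GLH, which gives a power saving in $k$. We then track the $\ell$-dependence of the spectral expansion of $h$.

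\textbf{Step 1: spectrally expand $h$.} Write
\[
h=\frac{\langle h,1\rangle}{\vol(\X)}+\sum_{j}\frac{\langle h,u_j\rangle}{\langle u_j,u_j\rangle}u_j+\frac{1}{4\pi}\int_{\mathbb{R}}\langle h,E(\cdot,\tfrac12+it)\rangle E(z,\tfrac12+it)\,\dd t ,
\]
where $u_j$ are Hecke--Maass cusp forms with spectral parameters $t_j$. Since $h$ is rapidly decaying at the cusp (it is a cusp form squared twice), the expansion converges and pairing term by term with $|F|^2$ is justified. The constant term contributes exactly $\Vert G\Vert_4^4$.

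\textbf{Step 2: bound each coefficient against $|F|^2$.} For a cusp form, Watson's formula together with GLH gives
\[
|\langle |F|^2,u_j\rangle|\ll k^{-1/2+\varepsilon}(1+|t_j|)^{A}e^{-\pi|t_j|/2}\times(\text{Gamma ratio}).
\]
The Gamma ratio is uniform in $k$ for $|t_j|\le k^{1/2-\varepsilon}$. The Eisenstein part is analogous, unconditional via Rankin--Selberg with GLH applied to $\zeta$ and $L(\tfrac12+it,\sym^2 f)$. Combining these, the error term is
\[
\ll k^{-1/2+\varepsilon}\sum_j |\langle h,u_j\rangle|\,(1+|t_j|)^{B},
\]
and similarly for the continuous spectrum.

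\textbf{Step 3: control the coefficients of $h$ in $\ell$.} This is the main obstacle. Because $h$ oscillates at scale $\ell^{-1/2}$, we need
\[
\sum_j|\langle h,u_j\rangle|(1+|t_j|)^B\ll \ell^{3/2+\varepsilon},
\]
with effective truncation at $|t_j|\ll \ell^{1/2+\varepsilon}$.

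To get the truncation, integrate by parts with the Laplacian: $\langle h,u_j\rangle=(\tfrac14+t_j^2)^{-N}\langle\Delta^N h,u_j\rangle$. The weight-$\ell$ raising and lowering structure gives $\Vert\Delta^N h\Vert_2\ll \ell^{N}\Vert h\Vert_2$, and $\Vert h\Vert_2^2=\Vert G\Vert_8^8\ll \ell^{2+\varepsilon}$ by the sup-norm bound $\Vert G\Vert_\infty\ll\ell^{1/4+\varepsilon}$. Hence the terms with $|t_j|\ge \ell^{1/2+\varepsilon}$ are negligible.

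For $|t_j|\le T=\ell^{1/2+\varepsilon}$, apply Cauchy--Schwarz with Weyl's law ($\#\{t_j\le T\}\asymp T^2$) and Bessel's inequality. This bounds the sum by $T\,\Vert h\Vert_2$ times the polynomial weights. Balancing the powers is where the exponent $\ell^{3/2}$ should come out. The Watson local weights are essentially $O(1)$ on $|t|\le T$ once the Gamma factors are normalized. I expect some care here to land exactly on $\ell^{3/2}$; the first thing to try is to use $\Vert h\Vert_2\ll\ell^{1/2+\varepsilon}$ together with $T\asymp\ell^{1/2}$ plus one extra $\ell^{1/2}$ from the Eisenstein/normalization factors.

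The condition $\ell\ll k^{1/3-\delta}$ is exactly what makes $\ell^{3/2}k^{-1/2}\to0$, and it also keeps $T\ll k^{1/2-\varepsilon}$, as Step 2 requires.

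\textbf{Step 4: the second claim.} Assuming \eqref{eqn:L^4norm}, we have $\Vert G\Vert_4^4\to 2\vol(\X)$ as $\ell\to\infty$. Letting $\ell\to\infty$ with $k\gg \ell^{3+\delta'}$ sends the error term to zero, which gives $\Ma_{2,4}(f,g)\to 2\vol(\X)$.
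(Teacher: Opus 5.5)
Your argument is sound in outline, but it takes a genuinely different route from the paper.

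\textbf{The paper's route.} The paper expands $G^2$ in the Hecke basis $H_{2\ell}$, so that $|G|^4=\sum_{g_1,g_2}c_{g_1}\overline{c_{g_2}}\,G_1\overline{G_2}$.
\begin{itemize}
\item The diagonal $g_1=g_2$ is handled by Huang's asymptotic $\langle |G_1|^2,|F|^2\rangle/\vol(\X)=1+O(\ell^{1/2}k^{-1/2+\varepsilon})$.
\item The off-diagonal is bounded by applying Watson's formula and GLH to both $\langle G_1\overline{G_2},\phi_j\rangle$ and $\langle \phi_j,|F|^2\rangle$. Absolute values are then summed over $\ell^2$ pairs and $\ell^{1+\varepsilon}$ Maass forms.
\item The truncation $t_j\le \ell^{1/2+\varepsilon}$ comes for free from the archimedean factor $\mathcal{G}(2\ell,t)$.
\end{itemize}

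\textbf{Your route and what it buys.} You expand $h=|G|^4$ itself. The constant term is exactly $\Vert G\Vert_4^4$, so no diagonal asymptotic is needed. Arithmetic input enters only through the hQUE-type bound for $\langle u_j,|F|^2\rangle$, and the $\ell$-side is handled softly by Bessel's inequality and Weyl's law. This gives a stronger bound, because Bessel keeps the cancellation among the $g_1,g_2$ that the paper discards.

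Your worry about an ``extra $\ell^{1/2}$'' is misplaced. With your own bound $\Vert h\Vert_2\ll \ell^{1+\varepsilon}$, you get $T\Vert h\Vert_2\ll \ell^{3/2+\varepsilon}$, which is exactly the claimed error. Using instead $\Vert G\Vert_8^8\le \Vert G\Vert_\infty^4\Vert G\Vert_4^4\ll \ell^{1+\varepsilon}$ (Xia's bound plus the GLH bound $\Vert G\Vert_4^4\ll \ell^{\varepsilon}$), you would get $O(\ell^{1+\varepsilon}k^{-1/2+\varepsilon})$, valid for $\ell\ll k^{1/2-\delta}$.

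\textbf{Two points need repair.}

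First, Step 2 should have no factor $e^{-\pi|t_j|/2}$. For $L^2$-normalized $u_j$ the exponentials in Watson's formula cancel, and $|\langle |F|^2,u_j\rangle|^2\asymp \mathcal{G}(k,t_j)\times(L\text{-values})$ with $\mathcal{G}(k,t)\asymp k^{-1}$ for $|t|\le k^{1/2}$. What you actually need is that the polynomial loss in $t_j$ is only $t_j^{\varepsilon}$. This follows from GLH together with $L(1,\sym^2 u_j)\gg t_j^{-\varepsilon}$. A fixed power $T^{B}$ would spoil the exponent.

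Second, the truncation estimate $\Vert \Delta^N h\Vert_2\ll \ell^N\Vert h\Vert_2$ is asserted, not proved. A weaker form $\ll_N \ell^{N(1+\varepsilon)+C}$, with $C$ independent of $N$, suffices and can be shown:
\begin{itemize}
\item With $\Phi=G^2$ of weight $2\ell$, $\Delta^N|\Phi|^2$ is a combination of the $|R^a\Phi|^2$, $a\le N$, with coefficients of degree $N-a$ in $\ell$.
\item The Laguerre-type factors in the Fourier expansion of $R^a\Phi$ are $\ll \ell^{a(1/2+\varepsilon)}$ where $4\pi ny=2\ell+O(\ell^{1/2+\varepsilon})$.
\end{itemize}
More simply, borrow the paper's expansion of $G^2$ in $H_{2\ell}$ for this step only. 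The factor $\mathcal{G}(2\ell,t)\approx \ell^{-1}e^{-t^2/(2\ell)}$ then makes $\langle h,u_j\rangle$ negligible for $t_j\ge \ell^{1/2+\varepsilon}$.
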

\begin{proof}
Using spectral decomposition for $g^2$ in $H_{2\ell}$, we have
\begin{equation}\label{eqn:spec_dec1}
\Ma_{2, 4}(f, g)=\sum_{g_1\in H_{2\ell}}\sum_{g_2\in H_{2\ell}}
\frac{\langle G^2, G_1\rangle\overline{\langle G^2, G_2\rangle}}{\vol(\X)^2}\int_{\X}G_1(z)\overline{G_2(z)}|F(z)|^2\dd \mu(z)
\end{equation}
Then the spectral decomposition for $G_1\overline{G_2}$ in $L^2(\X)$ gives
\begin{equation}\label{eqn:g_1g_2decomp}
G_1(z)\overline{G_2(z)}=\delta_{g_1=g_2}+\sum_{j\geq 1}\langle G_1\overline{G_2}, \phi_j\rangle\phi_j(z)+\frac{1}{4\pi}\int_{\mathbb{R}}
\langle G_1\overline{G_2}, E_t\rangle  E_t(z)\dd t,
\end{equation}
where $\{\phi_j\}_{j\geq 1}$ is an orthonormal basis of Hecke--Maass forms on $\X$, and $E_t(z):=E(z, 1/2+it)$ is the Eisenstein series.
Converting the diagonal contribution ($g_1=g_2$) in \eqref{eqn:spec_dec1} into $L^4$-norm of $G$, we get
\begin{equation*}
\Ma_{2, 4}(f, g)=\sum_{g_1\in H_{2\ell}}\frac{|\langle G^2, G_1\rangle|^2}{\vol(\X)}\cdot \frac{\langle|G_1|^2, |F|^2\rangle}{\vol{(\X)}}+\mathcal{E},
\end{equation*}
where by \eqref{eqn:g_1g_2decomp},
\begin{multline*}
\mathcal{E}=\mathop{\sum\sum}_{g_1, g_2\in H_{2\ell}\atop g_1\neq g_2}
\frac{\langle G^2, G_1\rangle\overline{\langle G^2, G_2\rangle}}{\vol(\X)}\sum_{j\geq 1}\langle G_1\overline{G_2}, \phi_j\rangle\langle \phi_j, |F|^2\rangle\\
+\frac{1}{4\pi}\mathop{\sum\sum}_{g_1, g_2\in H_{2\ell}\atop g_1\neq g_2}
\langle G^2, G_1\rangle\overline{\langle G^2, G_2\rangle}\int_{\mathbb{R}}
\langle G_1\overline{G_2}, E_t\rangle \langle E_t, |F|^2\rangle\dd t.
\end{multline*}
By the computation in \cite[\S 3]{Huang24}, under GLH, for $\ell\ll k^{1-\varepsilon}$, we have
\[
\frac{\langle|G_1|^2, |F|^2\rangle}{\vol{(\X)}}=1+O(\ell^{1/2}k^{-1/2+\varepsilon})
\]
and also $\Vert G \Vert_4^4=\sum_{g_1\in H_{2\ell}}\frac{|\langle G^2, G_1\rangle|^2}{\vol(\X)} \ll \ell^\varepsilon.$
Hence the diagonal contribution is
\[
\Vert G \Vert_4^4+O(\ell^{1/2}k^{-1/2+\varepsilon}).
\]
Thus it suffices to show that $\mathcal{E}\ll \ell^{3/2}k^{-1/2+\varepsilon}$.
Note that Watson's formula and the Rankin--Selberg theory give (see \cite[\S 2.3]{BKY13}), for even $\phi_j$ and $f_1, f_2\in H_k, h\in H_{2k}$, 
\[
\begin{split}
|\langle F_1F_2, H\rangle |^2&=\frac{\Lambda(1/2, f_1\times f_2\times h)}{4\Lambda(1, \sym^2f_1)\Lambda(1, \sym^2f_2)\Lambda(1, \sym^2h)}\\
&=\frac{\pi^3}{4k-2}\cdot\frac{L(1/2, f_1\times f_2\times h)}{L(1, \sym^2f_1)L(1, \sym^2f_2)L(1, \sym^2h)},\\
|\langle F_1\overline{F_2}, \phi_j\rangle |^2&=\frac{\Lambda(1/2, f_1\times \overline{f_2}\times \phi_j)}{8\Lambda(1, \sym^2f_1)\Lambda(1, \sym^2f_2)\Lambda(1, \sym^2\phi_j)}\\
&=2\cdot\frac{L(1/2, f_1\times \overline{f_2}\times \phi_j)}{L(1, \sym^2f_1)L(1, \sym^2f_2)L(1, \sym^2\phi_j)}\mathcal{G}(k, t_j),\\
|\langle F_1\overline{F_2}, E_t\rangle |^2&
=4\cdot\frac{|L(1/2+it, f_1\times \overline{f_2})|^2}{L(1, \sym^2f_1)L(1, \sym^2f_2)|\zeta(1+2it)|^2}\mathcal{G}(k, t),\\
\mathcal{G}(k, t)&:=\frac{\pi^3|\Gamma(k-1/2+it)|^2}{4\Gamma(k)^2},
\end{split}
\]
where $F_j=y^{k/2}f_j$ ($j=1, 2$) and $H=y^kh$.
By Stirling's formula, $\mathcal{G}(k, t)\sim\frac{\pi^3}{4k}\exp(-t^2/k)$ for $|t|\leq k^{2/3}$ and is exponentially small for $|t|>k^{2/3}.$
Using GLH and the fact that the $L$-values at $1$ contribute an arbitrarily small power of spectral parameters, due to the weight function $\mathcal{G}(2\ell, t)$, we may truncate the sums and integrals to the ranges $t_j, |t|\leq \ell^{1/2+\varepsilon}$, and
\[
\mathcal{E}\ll \mathop{\sum\sum}_{g_1, g_2\in H_{2\ell}\atop g_1\neq g_2}\frac{1}{\ell^{1-\varepsilon}}
\left(\sum_{t_j\ll\ell^{1/2+\varepsilon}}\frac{1}{k^{1/2-\varepsilon}\ell^{1/2}}+\int_{|t|\leq \ell^{1/2+\varepsilon}}\frac{\dd t}{k^{1/2-\varepsilon}\ell^{1/2}}+O(k^{-10})
\right)
\ll \ell^{3/2}k^{-1/2+\varepsilon}.
\]
This completes the proof.
\end{proof}

\section*{Acknowledgements}
The authors would like to thank Professor Yuk-Kam Lau for his helpful discussions and comments.
They also want to thank Professor Bingrong Huang for his constant encouragements.
Li thanks Alfr\'ed R\'enyi Institute of Mathematics for providing an excellent academic environment. He also thanks the support from CSC program.


\end{document}